\documentclass{amsart}
\usepackage{hyperref}
\usepackage{graphicx}
\usepackage{curves}
\usepackage{tikz}
\usetikzlibrary{arrows}
\usepackage{enumerate}
\usepackage{float}
\unitlength1cm

\newcommand{\arxiv}[1]{\href{http://arxiv.org/abs/#1}{arXiv:#1}}
\newcommand*{\mailto}[1]{\href{mailto:#1}{\nolinkurl{#1}}}

\newtheorem{theorem}{Theorem}[section]
\newtheorem{lemma}[theorem]{Lemma}
\newtheorem{corollary}[theorem]{Corollary}

\newcommand{\R}{\mathbb{R}}
\newcommand{\Z}{\mathbb{Z}}

\newcommand{\C}{\mathbb{C}}


\newcommand{\beq}{\begin{equation}}
\newcommand{\eeq}{\end{equation}}
\newcommand{\bea}{\begin{eqnarray}}
\newcommand{\eea}{\end{eqnarray}}

\newcommand{\ti}{\tilde}

\newcommand{\I}{\mathrm{i}}
\newcommand{\E}{\mathrm{e}}

\newcommand{\im}{\mathop{\mathrm{Im}}}

\newcommand{\FOm}{\mathcal{F}(\ti{\Lambda})}


\newcommand{\si}{\sigma}

\newcommand{\ga}{\gamma}
\newcommand{\om}{\omega}


\numberwithin{equation}{section}

\newcommand{\sigI}{\begin{pmatrix} 0 & 1 \\ 1 & 0 \end{pmatrix}}

 \newcommand{\noprint}[1]{}

\begin{document}

\title[Riemann--Hilbert problem on the torus]{A scalar Riemann--Hilbert problem on the torus:
\\
Applications to the KdV equation}

\author[M. Piorkowski]{Mateusz Piorkowski}
\address{Faculty of Mathematics\\ University of Vienna\\
Oskar-Morgenstern-Platz 1\\ 1090 Wien}
\email{\href{mailto:Mateusz.Piorkowski@univie.ac.at}{Mateusz.Piorkowski@univie.ac.at}}

\author[G. Teschl]{Gerald Teschl}
\address{Faculty of Mathematics\\ University of Vienna\\
Oskar-Morgenstern-Platz 1\\ 1090 Wien}
\email{\href{mailto:Gerald.Teschl@univie.ac.at}{Gerald.Teschl@univie.ac.at}}
\urladdr{\href{http://www.mat.univie.ac.at/~gerald/}{http://www.mat.univie.ac.at/\string~gerald/}}

\keywords{Riemann--Hilbert problem, KdV equation, Jacobi theta functions}
\subjclass[2000]{Primary 	35Q15, 35Q53; Secondary 	30F10, 33E05}
\thanks{Research supported by the Austrian Science Fund (FWF) under Grants No.\ P31651 and W1245.}

\begin{abstract}
We take a closer look at the Riemann--Hilbert problem associated to one-gap solutions of the Korteweg--de Vries equation. To gain more insight, we reformulate it as a scalar Riemann--Hilbert problem on the torus. This enables us to derive deductively the model vector-valued and singular matrix-valued solutions in terms of Jacobi theta functions. We compare our results with those obtained in recent literature.
\end{abstract}

\maketitle

\section{Introduction}
\subsection{Background} The main goal of this short note is to present an alternative approach to the existence/uniqueness results for  the model Riemann--Hilbert (R-H) problem presented in \cite{EGKT} and the construction of a singular matrix-valued solution found in \cite[Sect.~6]{EPT} (see also \cite[Sect.~3]{GGM}). Recall, that the objective of \cite{EGKT} and \cite{EPT} was to apply rigorously the nonlinear steepest descent method to the initial value problem for the Korteweg--de Vries (KdV) equation,
\beq\nonumber
q_t(x,t)=6q(x,t)q_x(x,t)-q_{xxx}(x,t), \quad (x,t)\in\R\times\R_+,
\eeq
with steplike initial data $q(x,0) = q(x)$:
\beq\nonumber \lim_{x \to \infty} q(x) = 0, \qquad \lim_{x \to -\infty} q(x) = -c^2, \quad c > 0.
\eeq
For large $t$, solutions to this problem display  different behaviours in three regions of the $(x,t)$-plane characterized by the ratio $x/t$ (see \cite[Sect.~1]{EGKT}). Of particular interest to us is the transition region given by $-6c^2 t < x < 4c^2 t$, where solutions asymptotically converge to a modulated elliptic wave. This result was proven in \cite{EPT}, where an ill-posedness of the corresponding holomorphic matrix model R-H problem was found.

The ill-posedness is closely related to the fact that the R-H problem for the KdV equation is formulated as a vector-valued problem. Note that, the standard Liouville-type argument relating existence to uniqueness for matrix-valued R-H problems having jump matrices with unit determinant (see for example \cite[Thm.~5.6]{JL}), cannot be generalized to the vector case in a straightforward manner. In fact, uniqueness can fail despite existence, as demonstrated in \cite[Sect.~2]{GT} for the simple case of a one soliton solution. Uniqueness was restored by assuming an additional symmetry condition.  

Another feature of the KdV equation playing an important role in the present note is the relationship between finite-gap solutions and elliptic Riemann surfaces (see \cite[Ch.~3]{BBEIM}). Algebro-geometric finite-gap solutions to the KdV equation can be given explicitly in terms of Jacobi theta functions via the Its--Matveev formula \cite{IM} (see also \cite{EGT}). Unsurprisingly, the solution of the corresponding model R-H problem is also expressed in terms of Jacobi theta functions. Given that these functions can be regarded as multivalued functions on an underlying Riemann surface, the natural question arises whether the model R-H problem in the plane found in \cite{EGKT} can be viewed as a R-H problem on a Riemann surface instead. In our simple one-gap case, that would correspond to a R-H problem on a torus. 

\subsection{Outline of this work}

In the next section we will show that the one-gap KdV model R-H problem can in fact be formulated as a \emph{scalar-valued} R-H problem on the torus. Equivalently, solutions to this problem can be characterized by quasiperiodic meromorphic functions in the complex plane (see Eq.~\eqref{eqjcav}), leading to the explicit R-H model solution found in \cite{EGKT} and singular solutions similar to the one described in \cite{EPT} (see also \cite{GGM}) in a straightforward manner.  Moreover, we show that the symmetry condition from \cite[Sect.~2]{GT} translates to halving the period (see Eq.~\eqref{SymCond}), while uniqueness follows from Liouville's Theorem.

Section~\ref{sec:mv} compares different regular and singular matrix-valued model solutions. As shown in \cite{EPT}, there is no regular matrix-valued model solution satisfying all the standard assumptions, hence it is necessary to drop some of them. We also comment on the regularity of the determinant of the solution in each case.

In the final section we compare our singular matrix-valued model solution to the ones found previously (see \cite{EPT}, \cite{GGM}). We point out that the corresponding vanishing problem has a nontrivial solution, meaning that there is no uniqueness for the associated singular model problem. In particular, the solutions described in \cite{EPT} and \cite{GGM} differ from the one we presented in Section~\ref{sec:mv}.

\section{The model Riemann--Hilbert problem}
\label{ModelRH}

In the following we recall the model vector-valued R-H problem for one-gap solutions of the KdV equation. For the underlying scattering theory and nonlinear steepest descent analysis leading to this problem in the transition region, we refer to \cite[Sect.~4]{EGKT}.

Find a vector-valued  function $m^{\text{mod}}(k)=(m_1^{\text{mod}}(k),\ m_2^{\text{mod}}(k))$ holomorphic in the domain $\C\setminus [\I c,  -\I c]$, continuous up to the boundary except at points $\mathcal G:=\{ \I c, \I a, -\I a, -\I c\}$ and satisfying the jump condition (with $\ti{\Lambda}=\frac{1}{2\pi}(\Lambda + t B)$, cf.~\cite[Sect.~3]{EPT}):
\beq\label{defmvecmod}
m_+^{\text{mod}}(k)= m_-^{\text{mod}}(k) v^{\text{mod}}(k),
\eeq
where
\beq\label{jumpcondmod}
v^{\text{mod}}(k) = \left\{ \begin{array}{ll}
\begin{pmatrix}
0 & \I \\
\I  & 0
\end{pmatrix},& k\in [\I c, \I a], \,\\
\begin{pmatrix}
0 & -\I \\
-\I & 0
\end{pmatrix},& k\in [-\I a, -\I c],\\
\begin{pmatrix}\E^{-2\pi\I\ti{\Lambda}}& 0\\
0&\E^{2\pi\I\ti{\Lambda}}\end{pmatrix},& k\in [\I a, -\I a],\\
\end{array}\right.
\eeq
the symmetry condition
\beq \label{symcond}
m^{\text{mod}}(-k) = m^{\text{mod}}(k) \sigI,
\eeq
and the normalization condition
\beq\label{normcond}
\lim_{k\to\I\infty} m^{\text{mod}}(k)= (1\ \ 1).\eeq
At any point $\kappa\in\mathcal G$ the vector function $m^{\text{mod}}(k)$ can have at most a fourth root singularity:  $m^{\text{mod}}(k)= O((k-\kappa)^{-1/4})$, $k\to \kappa$.
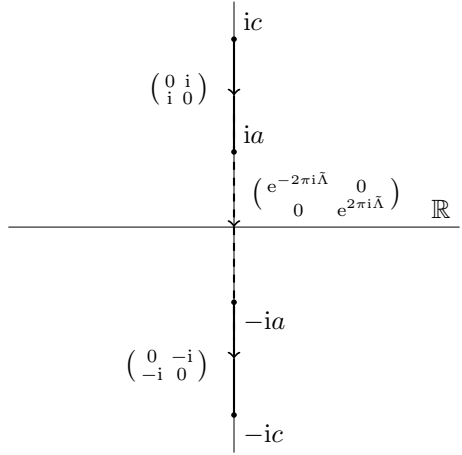
\begin{figure}[H]
\begin{tikzpicture}
\draw[very thin] (-3,0) -- (3,0);
\draw[very thin] (0,-3) -- (0,3);
\draw[thick,->] (0,-1) -- (0,-1.75);\draw[thick] (0,-1.75) -- (0,-2.5);
\draw[thick,->] (0,2.5) -- (0,1.75);\draw[thick] (0,1.75) -- (0,1);
\draw[thick,dashed,->] (0,1.5) -- (0,0);\draw[thick,dashed] (0,0) -- (0,-1.5);
\draw[fill] (0,2.5) circle (0.03) node[above right] {$\I c$};
\draw[fill] (0,-2.5) circle (0.03) node[below right] {$-\I c$};
\draw[fill] (0,1) circle (0.03) node[above right] {$\I a$};
\draw[fill] (0,-1) circle (0.03) node[below right] {$-\I a$};

\node[above right] at (2.5,0) {$\mathbb{R}$};

\draw (0.2,0.2) node[above right,xshift=-3,yshift=-5] {$\big(\begin{smallmatrix} \E^{-2\pi\I\ti{\Lambda}}& 0\\ 0 & \E^{2\pi\I\ti{\Lambda}}\end{smallmatrix}\big)$};
\draw (-0.2,-1.5) node[below left] {$\big(\begin{smallmatrix} 0& -\I\\ -\I & 0\end{smallmatrix}\big)$};
\draw (-0.2,1.5) node[above left] {$\big(\begin{smallmatrix} 0& \I\\ \I & 0\end{smallmatrix}\big)$};

\end{tikzpicture}
\caption{Jump contour for the model R-H problem}
\end{figure}
The solution to this problem was given in \cite{EGKT}. As mentioned in the introduction, we want to solve this problem in a slightly different way, which should
shed some further light on the model problem. For this, it will be convenient to denote by $m(k)$ a generic vector-valued meromorphic function satisfying the jump condition \eqref{jumpcondmod}.

For our first transformation we define
\beq\nonumber
\ti{\ga}(k) = \sqrt[4]{\frac{k^2+a^2}{k^2+c^2}}
\eeq
with the branch cuts along $[\pm\I a, \pm\I c]$ and the branch chosen such that $\ti{\ga}(k)>0$ for $k\in [\I c,\infty)$.
Note that we have $\ti{\ga}(-k)=\ti{\ga}(k)$ and $\ti{\ga}(k)>0$ for $k\in\R$.
Then $\ti{\ga}(k)$ solves the scalar R-H problem
\[
\ti{\ga}_+(k) = \pm\I \ti{\ga}_-(k), \qquad k\in[\pm\I a, \pm\I c],
\]
and we set
\beq \label{mn}
m(k) = \ti{\ga}(k) n(k)
\eeq
such that $n(k)$ satisfies the jump condition \eqref{jumpcondmod}, except that the jumps on $[-\I c,-\I a]$ and $[\I c,\I a]$ are replaced by
\beq\label{eqjumpsigI}
n_+(k) = n_-(k) \sigI.
\eeq
The reason for this change is that it will be convenient to look at this problem on the elliptic Riemann surface $X$ associated with the function
\[
w(k)=\sqrt{(k^2 + c^2)(k^2 + a^2)},
\]
defined on $\C \setminus ([-\I c, -\I a] \cup [\I a, \I c])$ with $w(0) > 0$. The two sheets of $X$ are glued along the cuts  $[\I c,\I a]$ and $[-\I a, -\I c]$. Points on this surface are denoted by $p=(k,\pm )$. To simplify formulas we keep the notation  $k=(k,+)$ for points on the upper sheet of $X$.

In this setup, the two components $n_1$, $n_2$ of the vector $n \colon \C\setminus[-\I c,\I c] \to \C^2$ can be regarded as the values of a single function $N \colon X \to \C$ on the upper, lower sheet, respectively. Explicitly,
\beq \label{nN}
n(k) = (N((k,+)),N((k,-))).
\eeq
In this case the jump condition \eqref{eqjumpsigI} implies that $N$ will have no jump along the cuts, where the two sheets are glued together. However, the other jump
will remain. In fact, the jump contour on $X$ is a circle through the two branch points $-\I a$ and $\I a$, on which we have the jump condition
\beq\label{jumpN}
N_+(p) = N_-(p) \E^{-2\pi\I\ti{\Lambda}}.
\eeq
Note that the symmetry condition \eqref{symcond} translates to
\beq\label{symN}
N(p^*) = N(-p),
\eeq
where $(k,\pm)^* = (k,\mp)$ denotes the sheet exchange map and $-(k,\pm)=(-k,\pm)$.

Next we choose a canonical homology basis of cycles $\{\bf a, \bf b\}$ as follows: The $\bf a$-cycle surrounds the points $-\I a,\I a$ starting on the upper sheet from the left side of
the cut $[\I c,\I a]$ and continues on the upper sheet to the left part of $[-\I a, -\I c]$ and returns after changing sheets. The cycle $\bf b$ surrounds the points $\I a, \I c$ counterclockwise on the upper sheet.

Then the normalized holomorphic differential is given by
\beq\nonumber
d\om=\Gamma\frac{d\zeta}{w(\zeta)}, \quad \text{where}\ \Gamma:=\left(\int_{\bf a} \frac{d \zeta}{w(\zeta)}\right)^{-1}\in \I \R_-,
\eeq
such that $\int_{\bf a}d\om=1$ and 
\beq\nonumber
\tau=\int_{\bf b} d\om\in \I \R_+.
\eeq
Let
\beq\nonumber
\theta_3(z\,\big|\,\tau)=\sum_{n\in\Z}\exp\big( (n^2\tau + 2n z)\pi\I \big), \quad z\in\C,
\eeq
be the associated Jacobi theta function (see for example \cite{Byrd}). Recall that $\theta_3$ is even, $\theta_3(-z\,\big|\,\tau)=\theta_3(z\,\big|\,\tau)$, and satisfies
\beq\label{jthper}
\theta_3(z+ n + \tau\ell\,\big|\,\tau)=\theta_3(z\,\big|\,\tau )\E^{- \pi\I \tau \ell^2 - 2\pi\I\ell z} \quad\text{for}\quad \ell,n \in \Z.
\eeq
Furthermore, let $A(p)=\int_{\I c}^p d\om$ be the Abel map on $X$. We identify the upper sheet of $X$ with the complex plane $\C\setminus([\I c, \I a]\cup [-\I a, -\I c])$.
Restricting the path of integration to $\C \setminus [\I c, -\I c]$ we observe that $A(k)$ is a holomorphic function in that given domain with the following properties:
\begin{itemize}
\item $A_+(k)=-A_-(k) \pmod{1}$, \ for $k\in [\I c, \I a]\cup [-\I a, -\I c]$;
\item $A_+(k)-A_-(k)=-\tau$, \ for $k\in [\I a, -\I a]$;
\item $A(-k)=-A(k) + \frac{1}{2}$, \ for $k\in\C\setminus [\I c, -\I c]$, 
\item $A_+(\I a)=-\frac{\tau}{2} =-A_-(\I a)$, $A_+(-\I a) = -\frac{\tau}{2} + \frac{1}{2}$,  $A_-(-\I a)=\frac{\tau}{2} + \frac{1}{2}$.
\item $A(\infty)=\frac{1}{4}$, $A(k)= \frac{1}{4} -\Gamma k^{-1} + O(k^{-3})$, \ as $k\to\infty$.
\end{itemize}
For points on the lower sheet we set $A(p^*)=-A(p)$.
Finally, denote by $K= \frac{1+\tau}{2}$ the Riemann constant associated with $X$ and abbreviate $\infty_\pm= (\infty,\pm)$, $0_\pm= (0,\pm)$. Note that
$A(0_+)=\frac{1}{4}+\frac{\tau}{2}$. By Riemann's vanishing theorem \cite{FK} the zeros of $\theta_3$ are simple and given by $z = K +\Z+\tau \Z$.

According to the Jacobi inversion theorem \cite{FK}, the Abel map $A$ maps our Riemann surface $X$ bijectively to its associated Jacobi variety $\C/ (\Z+\tau\Z)$ depicted in Figure~\ref{figjv}.
\begin{figure}[ht]
\begin{tikzpicture}
\path [fill=gray!40] plot (0,-1.5) -- (2,-1.5) -- (2,1.5) -- (0,1.5) --cycle;
\path [fill=gray!20] plot (-2,-1.5) -- (0,-1.5) -- (0,1.5) -- (-2,1.5) --cycle;
\draw (-4.5,3) -- (4.5,3);
\draw (-4.5,0) -- (4.5,0);
\draw (-4.5,-3) -- (4.5,-3);
\draw (-4,-3.5) -- (-4,3.5);
\draw (0,-3.5) -- (0,3.5);
\draw (4,-3.5) -- (4,3.5);
\draw[dashed] (-4.5,1.5) -- (4.5,1.5);
\draw[dashed] (-4.5,-1.5) -- (4.5,-1.5);

\draw (4,0) node[above right] {$1$};
\draw (0,3) node[above right] {$\tau$};

\draw[fill] (0,0) circle (0.03) node[above right] {$\I c$};
\draw[fill] (-2,0) circle (0.03) node[above left] {$-\I c$};
\draw[fill] (2,0) circle (0.03) node[above right] {$-\I c$};
\draw[fill] (-1,0) circle (0.03) node[below] {$\infty_-$};
\draw[fill] (1,1.5) circle (0.03) node[below] {$0_+$};
\draw[fill] (-1,1.5) circle (0.03) node[below] {$0_-$};
\draw[fill] (1,-1.5) circle (0.03) node[below] {$0_+$};
\draw[fill] (-1,-1.5) circle (0.03) node[below] {$0_-$};
\draw[fill] (1,0) circle (0.03) node[below] {$\infty_+$};
\draw[fill] (0,1.5) circle (0.03) node[above right] {$\I a$};
\draw[fill] (0,-1.5) circle (0.03) node[below right] {$\I a$};
\draw[fill] (-2,1.5) circle (0.03) node[above] {$-\I a$};
\draw[fill] (2,1.5) circle (0.03) node[above] {$-\I a$};
\draw[fill] (-2,-1.5) circle (0.03) node[below] {$-\I a$};
\draw[fill] (2,-1.5) circle (0.03) node[below] {$-\I a$};

\end{tikzpicture}
\caption{Jacobi variety (dark/light gray denotes the upper/lower sheet)}\label{figjv}
\end{figure}
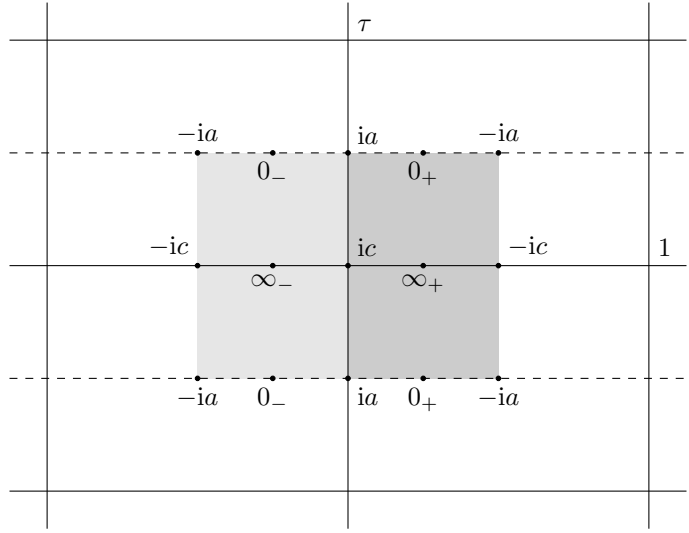
The jump contour is indicated by the dashed line, while the dark/light shaded region correspond to the upper/lower sheet. Moreover, a meromorphic function $E(z)$ given by
\beq \label{NE}
E(A(p))=N(p)
\eeq
will satisfy our original jump condition if and only if
\beq\label{eqjcav}
E(z+1) = E(z), \qquad E(z+\tau) =  E(z)\E^{2\pi\I\ti{\Lambda}},
\eeq
and it will satisfy the symmetry condition if and only if
\beq \label{SymCond}
E(z+\tfrac{1}{2}) = E(z).
\eeq
If this latter condition holds we will call $E$ symmetric. If we have $E(z+\frac{1}{2}) = -E(z)$, we will call $E$ anti-symmetric. 

At this stage we remind the reader that we have four equivalent ways of describing vector-valued functions satisfying the jump condition \eqref{jumpcondmod}:
\begin{align*}
    m(k) \quad \longleftrightarrow \quad n(k) \quad \longleftrightarrow \quad N(p) \quad \longleftrightarrow \quad E(z)
\end{align*}
related via \eqref{mn}, \eqref{nN} and \eqref{NE} respectively.
The most convenient framework will be given through the quasiperiodic meromorphic functions $E(z)$. Let us consider the space $\FOm$ of all quasiperiodic meromorphic functions on $\C/ (\Z+\tau\Z)$ satisfying \eqref{eqjcav}, without imposing any symmetry requirements.
Note that $E\in\FOm$ is uniquely determined up to a constant by its divisor $(E)= \sum_{j=1}^n \mathcal{D}_{z_j}- \sum_{j=1}^n \mathcal{D}_{p_j}$, 
since the quotient of two such functions with the same divisor is elliptic without poles, hence a constant. Moreover, since $\frac{E'}{E}$ is elliptic, integrating this function along a fundamental polygon
shows that the number of zeros and poles must be equal. Note also that there must be at least one pole (unless $\ti{\Lambda}=0$). Integrating
$z\frac{E'(z)}{E(z)}$ along a fundamental polygon gives
\beq\label{eqabelom}
\sum_{j=1}^n z_j - \sum_{j=1}^n p_j = \ti{\Lambda} \pmod{\Z+\tau\Z}.
\eeq
Choosing representatives $z_j$, $p_j \in \C$ such that
\beq\label{eqabelom2}
\sum_{j=1}^n z_j - \sum_{j=1}^n p_j = \ti{\Lambda} \pmod{\Z},
\eeq
we can represent $E$ as
\beq\label{eqFtheta}
E(z) = E_0\prod_{j=1}^n \frac{\theta_3(z-z_j-K|\tau)}{\theta_3(z-p_j-K|\tau)}.
\eeq
Indeed the right-hand side has the required zeros and poles while \eqref{jthper} and \eqref{eqabelom2} ensure that it is elliptic.

\begin{lemma}
The divisor of $E\in\FOm$ is invariant with respect to translations of $\frac{1}{2}$ if and only if $E$ is either symmetric or anti-symmetric
\end{lemma}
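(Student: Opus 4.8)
The plan is to reduce the statement to the uniqueness fact already recorded above, namely that an element of $\FOm$ is determined up to a multiplicative constant by its divisor. The device is to introduce the shifted function $F(z):=E(z+\tfrac12)$ and to compare it with $E$.

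First I would verify that $F\in\FOm$. This is immediate from the quasiperiodicity \eqref{eqjcav}: since $E$ has period $1$ we get $F(z+1)=E(z+\tfrac32)=E(z+\tfrac12)=F(z)$, while $F(z+\tau)=E(z+\tau+\tfrac12)=E(z+\tfrac12)\E^{2\pi\I\ti{\Lambda}}=F(z)\E^{2\pi\I\ti{\Lambda}}$. Next I would note that the divisor $(F)$ is precisely $(E)$ translated by $-\tfrac12$, which on the Jacobi variety $\C/(\Z+\tau\Z)$ is the same as the translate by $+\tfrac12$ (since $\pm\tfrac12$ coincide modulo the period lattice). Hence the hypothesis that $(E)$ is invariant under translations by $\tfrac12$ is \emph{equivalent} to the equality $(F)=(E)$.

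With this in hand both directions are short. For the \emph{if} part, if $E(z+\tfrac12)=\pm E(z)$ then $F=\pm E$ has the same zeros and poles as $E$ with the same multiplicities, so $(F)=(E)$, and by the previous paragraph $(E)$ is $\tfrac12$-invariant. For the \emph{only if} part, assume $(E)$ is $\tfrac12$-invariant, so $(F)=(E)$. Then $F/E$ is meromorphic on $\C/(\Z+\tau\Z)$ with empty divisor — it is genuinely elliptic because the quasiperiodicity factors of $F$ and $E$ cancel — hence a nonzero constant $c$, i.e.\ $E(z+\tfrac12)=c\,E(z)$. Iterating this relation and invoking $E(z+1)=E(z)$ from \eqref{eqjcav} gives $c^2=1$, so $c=1$ or $c=-1$, which is exactly the assertion that $E$ is symmetric or anti-symmetric.

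I do not expect a genuine obstacle here: the argument is essentially a one-line application of Liouville's theorem once $F$ has been introduced, the same strategy used for the uniqueness claims elsewhere in this note. The only points that need a moment's care are checking $F\in\FOm$, the remark that $\pm\tfrac12$ agree modulo the lattice, and the tacit use of $E\not\equiv0$ (built into $E$ being a nonzero element of $\FOm$) so that the quotient $F/E$ is defined.
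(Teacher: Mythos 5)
Your argument is correct and is essentially the paper's own proof: the paper likewise forms the quotient $C=E(z+\tfrac12)/E(z)$, observes it is elliptic without poles (using the divisor invariance) hence constant by Liouville, and deduces $C^2=1$ from the period $1$; your version merely makes explicit the checks that $E(z+\tfrac12)\in\FOm$ and that $\pm\tfrac12$ agree modulo the lattice. No gaps.
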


\begin{proof}
Observe that $C=\frac{E(z+\frac{1}{2})}{E(z)}$ is elliptic without poles and hence constant. Moreover, $C^2=\frac{E(z+\frac{1}{2})}{E(z)} \cdot \frac{E(z+1)}{E(z+\frac{1}{2})} =1$
shows $C=\pm 1$. The converse is trivial.
\end{proof}

\begin{lemma} \label{uniqueLemma}
If $E\in\FOm$ is (anti-)symmetric and has at most two poles, it is already uniquely determined up to a constant by its poles. Conversely,
for each choice of two poles $p_1$, $p_2=p_1+\frac{1}{2}$ there is a unique (up to constants) symmetric and a unique anti-symmetric
function $E\in\FOm$, with at most simple poles at $p_1$ and $p_2$. In fact $p_1$, $p_2$ are simple poles, unless $\tilde \Lambda \in \mathbb{Z}$, in which case the symmetric solution is constant.  
\end{lemma}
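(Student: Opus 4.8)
The plan is to combine the preceding Lemma — invariance of the divisor of an (anti-)symmetric $E$ under the half-period translation $z\mapsto z+\tfrac12$ — with the Abel-type relation \eqref{eqabelom} to determine the divisor of $E$ completely, and then to use the explicit theta-quotient \eqref{eqFtheta} both to write down the two solutions and to tell them apart by their behaviour under \eqref{SymCond}.

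First I would fix the shape of the divisor. Suppose $E\in\FOm$ is (anti-)symmetric and not constant. By the preceding Lemma its divisor $(E)=\sum_j\mathcal D_{z_j}-\sum_j\mathcal D_{p_j}$ is invariant under $z\mapsto z+\tfrac12$; since $\tfrac12\notin\Z+\tau\Z$, no pole can be fixed by this translation, so every pole is paired (counting multiplicity) with a distinct pole shifted by $\tfrac12$, and likewise for zeros. With at most two poles this leaves only two possibilities: either $E$ has exactly two simple poles $p_1$, $p_2=p_1+\tfrac12$ and, since the number of zeros equals the number of poles, exactly two simple zeros $z_1$, $z_2=z_1+\tfrac12$, or (when $\ti\Lambda\in\Z$) $E$ is in fact constant. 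Feeding the former into \eqref{eqabelom} gives $2(z_1-p_1)\equiv\ti\Lambda\pmod{\Z+\tau\Z}$, so $z_1-p_1$ equals $\tfrac{\ti\Lambda}{2}$ up to one of the four half-periods; as the unordered pair $\{z_1,z_1+\tfrac12\}$ depends on $z_1$ only modulo $\tfrac12$, the zero set of $E$ must be one of exactly two sets,
\[
Z_{\mathrm s}=\Big\{p_1+\tfrac{\ti\Lambda}{2},\ p_1+\tfrac{\ti\Lambda}{2}+\tfrac12\Big\},\qquad
Z_{\mathrm a}=\Big\{p_1+\tfrac{\ti\Lambda}{2}+\tfrac\tau2,\ p_1+\tfrac{\ti\Lambda}{2}+\tfrac{1+\tau}{2}\Big\}.
\]
Since an element of $\FOm$ is determined up to a multiplicative constant by its divisor, for each admissible pole pair there are at most two such $E$ up to constants.

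Next I would realize these two functions explicitly and read off their symmetry. Choosing representatives $z_1=p_1+\tfrac{\ti\Lambda}{2}$, $z_2=p_1+\tfrac{\ti\Lambda}{2}+\tfrac12$ (respectively $z_1=p_1+\tfrac{\ti\Lambda}{2}+\tfrac\tau2$, $z_2=p_1+\tfrac{\ti\Lambda}{2}+\tfrac12-\tfrac\tau2$) together with $p_1$, $p_2=p_1+\tfrac12$, one verifies \eqref{eqabelom2} and hence obtains genuine elements of $\FOm$ via \eqref{eqFtheta}. Computing $E(z+\tfrac12)/E(z)$ with the help of the quasi-periodicity \eqref{jthper} of $\theta_3$ then gives $+1$ in the first case and $-1$ in the second — the forced extra $\tau$-shift $z_2=z_1+\tfrac12-\tau$ in the second case is exactly what produces the sign — so the function built from $Z_{\mathrm s}$ is symmetric and the one built from $Z_{\mathrm a}$ is anti-symmetric. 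Combined with the counting above, this settles both the uniqueness claim (an (anti-)symmetric $E$ with at most two poles must have the zero set dictated by its symmetry type, $Z_{\mathrm s}$ resp. $Z_{\mathrm a}$, hence is determined up to a constant by its poles) and the existence claim (for each admissible pole pair there is precisely one symmetric and one anti-symmetric solution with at most simple poles there).

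Finally, for the last assertion I would compare $Z_{\mathrm s}$ and $Z_{\mathrm a}$ with the pole set $\{p_1,p_1+\tfrac12\}$. Because $\tau\in\I\R_+$ we have $\tfrac\tau2\notin\tfrac12\Z$, so $Z_{\mathrm a}$ is always disjoint from the pole set and the anti-symmetric solution genuinely has simple poles at $p_1$, $p_2$; the symmetric solution does too as long as $\tfrac{\ti\Lambda}{2}\notin\tfrac12\Z$, i.e. as long as $\ti\Lambda\notin\Z$. If $\ti\Lambda\in\Z$, however, $Z_{\mathrm s}$ coincides with the pole set, every zero cancels a pole, and the symmetric solution collapses to a constant. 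The only step that needs genuine care is the theta-function computation of the half-period quotient that separates the two symmetry classes; everything else is bookkeeping with divisors and half-periods.
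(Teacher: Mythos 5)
Your proposal is correct and follows essentially the same route as the paper: divisor invariance under the half-period shift from the preceding lemma, the Abel-type relation \eqref{eqabelom} reducing the zero set to two possibilities, explicit realization via the theta quotient \eqref{eqFtheta}, the quasi-periodicity \eqref{jthper} computation yielding the sign $\pm 1$ under $z\mapsto z+\tfrac12$, and the observation that zeros cancel poles exactly when $\ti\Lambda\in\Z$ in the symmetric case. You are somewhat more explicit than the paper about why the poles must pair up as $p_1$, $p_1+\tfrac12$, but the argument is the same.
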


\begin{proof}
Let $E$ be (anti-)symmetric and nonconstant. Denote its poles by $p_1$, $p_2=p_1+\frac{1}{2} \pmod{\Z+\tau\Z}$ and its zeros by $z_1$, $z_2=z_1+\frac{1}{2} \pmod{\Z+\tau\Z}$.
Choosing representatives in $\C$, \eqref{eqabelom} implies $2(z_1- p_1) = \tilde \Lambda + m +n \tau$ for some $m,n\in\Z$.
In particular, since adding a period to $z_1$ is irrelevant, we can assume $m,n\in\{0,1\}$. If $m=1$ this just amounts to exchanging
$z_1$ and $z_2$ and hence we can assume $m=0$ without loss of generality. Now using
$z_1 = p_1 + \frac{\tilde \Lambda}{2} + n\frac{\tau}{2}$, we can set $z_2= z_1 + \frac{1}{2} - n \tau$ and $p_2 = p_1 + \frac{1}{2}$ such that \eqref{eqabelom2}  holds. Now one computes using \eqref{jthper} that \eqref{eqFtheta} fulfills $E(z+\frac{1}{2})=(-1)^n E(z)$. In other words, $p_1 \in \C/(\Z+\tau\Z)$ and $n \in \Z_2$ uniquely determine $E$ up to
a constant. One can check, that the zeros and poles cancel if and only if $n = 0$ and $\tilde \Lambda \in \mathbb{Z}$, corresponding to a constant symmetric solution.
\end{proof}

\begin{corollary}
If $E\in\FOm$ has at most two poles $p_1$, $p_2=p_1+\frac{1}{2}$ then there exist unique $c_s,c_a \in \C$ such that $E=c_s E_s + c_a E_a$,
where $E_s$, $E_a$ are the symmetric, anti-symmetric solutions constructed in the previous lemma, respectively.
\end {corollary}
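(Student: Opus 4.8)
The plan is to build the decomposition by hand using symmetrization/antisymmetrization and then to identify each piece with a multiple of $E_s$, resp.\ $E_a$, via Lemma~\ref{uniqueLemma}. Given $E\in\FOm$ with poles contained in $\{p_1,p_2\}$, $p_2=p_1+\frac12$, I would set
$$
F_s(z):=\tfrac12\bigl(E(z)+E(z+\tfrac12)\bigr),\qquad F_a(z):=\tfrac12\bigl(E(z)-E(z+\tfrac12)\bigr),
$$
so that trivially $E=F_s+F_a$. Using the periodicity $E(z+1)=E(z)$ from \eqref{eqjcav} one checks $F_s(z+\tfrac12)=F_s(z)$ and $F_a(z+\tfrac12)=-F_a(z)$, i.e.\ $F_s$ is symmetric and $F_a$ anti-symmetric in the sense of \eqref{SymCond}. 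Moreover the half-period shift $z\mapsto z+\frac12$ leaves the relations \eqref{eqjcav} intact (it affects neither the period $1$ nor the period $\tau$), so $z\mapsto E(z+\frac12)$ again lies in $\FOm$, and hence so do $F_s$ and $F_a$.

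Next I would locate their poles. The shifted function $E(z+\frac12)$ has poles contained in $\{p_1-\frac12,\,p_2-\frac12\}$, and since $p_2=p_1+\frac12$ this set equals $\{p_1,p_2\}$ modulo $\Z+\tau\Z$; hence $F_s$ and $F_a$ both have poles contained in $\{p_1,p_2\}$. By Lemma~\ref{uniqueLemma} the symmetric (resp.\ anti-symmetric) element of $\FOm$ with poles in $\{p_1,p_2\}$ is unique up to a multiplicative constant, so $F_s=c_sE_s$ and $F_a=c_aE_a$ for suitable $c_s,c_a\in\C$. Here one allows $c_s=0$ (resp.\ $c_a=0$) if $F_s$ (resp.\ $F_a$) vanishes identically, and one notes that even in the exceptional case $\ti\Lambda\in\Z$, where $E_s$ is a nonzero constant, this still makes sense. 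This gives $E=c_sE_s+c_aE_a$.

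For the uniqueness of the coefficients, suppose $c_sE_s+c_aE_a=c_s'E_s+c_a'E_a$. Then $(c_s-c_s')E_s=(c_a'-c_a)E_a$, where the left-hand side is symmetric and the right-hand side anti-symmetric; but any $F\in\FOm$ that is simultaneously symmetric and anti-symmetric satisfies $F(z)=F(z+\frac12)=-F(z)$, hence $F\equiv 0$. Since neither $E_s$ nor $E_a$ vanishes identically (by Lemma~\ref{uniqueLemma}, $E_a$ is always nonconstant, and $E_s$ is either nonconstant or a nonzero constant), we conclude $c_s=c_s'$ and $c_a=c_a'$. I do not anticipate a genuine obstacle here; the only step requiring a little care is the bookkeeping of the pole locations of $F_s$ and $F_a$, where the hypothesis $p_2=p_1+\frac12$ is precisely what makes the two-point set $\{p_1,p_2\}$ invariant under the half-period translation and thus keeps us within the scope of Lemma~\ref{uniqueLemma}.
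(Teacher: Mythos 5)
Your proposal is correct and follows exactly the argument the paper intends (the corollary is stated without proof as an immediate consequence of Lemma~\ref{uniqueLemma}): split $E$ into its symmetric and anti-symmetric parts via the half-period shift, identify each with a multiple of $E_s$, resp.\ $E_a$, and get uniqueness of the coefficients because only the zero function is both symmetric and anti-symmetric. Your handling of the degenerate cases (vanishing parts, $\ti\Lambda\in\Z$) is also consistent with the lemma, so there is nothing to add.
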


Returning to our original model problem, we want the poles of $E$ to lie at the images of $\I a$ and $-\I a$ under the Abel map $A$, that is $p_1=\frac{\tau}{2}$ and $p_2=\frac{1+\tau}{2} = K$. The reason is that we require $m^{\text{mod}}(k)$ to be holomorphic, with at most fourth root singularities at points of $\mathcal G$. As $\ti \ga(k)$ has fourth root zeros at $\pm \I a$ and the Abel map $A$ has  square root singularities at the points of $\mathcal G$, simple poles at $\frac{\tau}{2}$, $\frac{1+\tau}{2}$ in $\C/(\Z+\tau\Z)$  translate to fourth root singularities at $\pm \I a$ of $m^{\text{mod}}(k)$ under the inverse of the Abel map. In fact, this is the only choice of the pole structure leading to a holomorphic $m^{\text{mod}}(k)$ with at most fourth root singularities. 

For the zeros of the symmetric ($n = 0$) and the anti-symmetric ($n = 1$) solution we use $z_1 = \frac{\ti{\Lambda}}{2} + \frac{(n+1) \tau}{2}$, $z_2=  \frac{\ti{\Lambda}}{2} + \frac{1-(n-1)\tau}{2}$. 
Denote by
\beq \label{SymSol}
E_s(z)= \frac{\theta_3(z-\frac{\ti{\Lambda}}{2}+\frac{1}{2}|\tau)\theta_3(z-\frac{\ti{\Lambda}}{2}|\tau)}{\theta_3(z+\frac{1}{2}|\tau)\theta_3(z|\tau)}
\eeq
the corresponding symmetric and by
\beq \label{AntiSol}
E_a(z)= \frac{\theta_3(z-\frac{\ti{\Lambda}}{2}-\frac{1+\tau}{2}|\tau)\theta_3(z-\frac{\ti{\Lambda}}{2}+\frac{\tau}{2}|\tau)}{\theta_3(z+\frac{1}{2}|\tau)\theta_3(z|\tau)}
\eeq
the corresponding anti-symmetric solution. Using the identity (cf.~ \cite{dubr} formula (1.4.3))
\[
 \textstyle \theta_3(z|\tau) \theta_3(z+ \frac{1}{2} |\tau) = \theta_3(2z+\frac{1}{2} | 2\tau)\,\theta_3(\frac{1}{2} | 2\tau),
 \]
 (note that the quotient of both sides is a holomorphic elliptic function which equals $1$ at $z=\frac{1}{2}$) we can write the formula for $E_s$ somewhat more compactly as
 \[
 E_s(z) = \frac{\theta_3(2z-\ti{\Lambda}+ \frac{1}{2}|2\tau)}{\theta_3(2z+\frac{1}{2}|2\tau)}.
 \]
So for the symmetric case $n=0$, we have $\I\im(z_j) = \frac{\tau}{2} \pmod{\Z+\tau\Z}$ and both zeros will be on $[-\I a, \I a]$ (see Figure \ref{figjv}).
In the anti-symmetric case $n=1$ we have $\I\im(z_j) =0 \pmod{\Z+\tau\Z}$ and both zeros will be on $(\infty,-\I c] \cup [\I c, \infty]$.
In particular, if $\ti{\Lambda}=\frac{1}{2} \pmod {1}$ the two zeros of the anti-symmetric solution will be at $\infty_\pm$ and we cannot normalize at this point.
Moreover, if $\ti{\Lambda}=0 \pmod {1}$ such that we are looking for elliptic functions, we have $E_s(z)=1$ (i.e.\ zeros and poles coincide) and
the zeros of $E_a(z)$ will be at $z_1=0$ and $z_2=\frac{1}{2}+\tau$.

Hence all solutions of \eqref{jumpN} with poles at most at $\pm\I a$ are given by
\[
N(p) = c_s N_s(p) + c_a N_a(p), \quad N_s(p)=E_s(A(p)), \ N_a(p)=E_a(A(p)), \quad c_s,c_a \in \C,
\]
and we have
\[
N(\infty_\pm)= c_s E_s(\tfrac{1}{4}) \pm c_a E_a(\tfrac{1}{4}), \qquad
N(0_\pm)= c_s E_s(\tfrac{1}{4}+\tfrac{\tau}{2}) \pm c_a E_a(\tfrac{1}{4}+\tfrac{\tau}{2}),
\]
with $E_s(\frac{1}{4})\ne 0$ for all $\ti{\Lambda}\in\R$ and $E_a(\frac{1}{4})\ne 0$ for all $\ti{\Lambda}\neq \frac{1}{2} \pmod{1}$. Moreover, $N$ will satisfy
\eqref{symN} if and only if $c_a=0$.

Note that in the special case $\ti{\Lambda}=0 \pmod {1}$ we have (up to constants):
\[
N_s((k,\pm))=1 \qquad N_a((k,\pm))= \frac{k^2+ c^2}{\pm w(k)}
\]
In the case $\ti{\Lambda}=\frac{1}{2} \pmod {1}$ we have (again up to constants):
\[
N_s((k,\pm))= \frac{k}{\sqrt{k^2+a^2}} \qquad N_a((k,\pm))=  \frac{1}{\sqrt{k^2+a^2}},
\]
where the root has the branch cut along $[-\I a,\I a]$.

Returning to our original problem we have shown:

\begin{lemma}
The function
\beq\nonumber
m^{\emph{mod}}(k) = \frac{\ti{\ga}(k)}{N_s(\infty_+)} \big(N_s((k,+)),N_s((k,-))\big)
\eeq
is the unique vector-valued function which is holomorphic in the domain $\C\setminus [\I c,  -\I c]$, has square integrable boundary values,
and satisfies the jump condition \eqref{defmvecmod}, the symmetry condition \eqref{symcond} and the normalization condition \eqref{normcond}.

Specifically, $m^{\text{mod}}(k)$ is continuous up to the boundary except at points of the set
$\mathcal G:=\{ \I c, \I a, -\I a, -\I c\}$ where it has at most a fourth root singularity:  $m^{\text{mod}}(k)= O((k-\kappa)^{-1/4}))$, $k\to \kappa$.
\end{lemma}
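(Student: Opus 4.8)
The plan is to establish existence and uniqueness separately, working throughout with the dictionary $m(k)\leftrightarrow n(k)\leftrightarrow N(p)\leftrightarrow E(z)$ set up above: existence amounts to reading off that the stated candidate has all the required properties, while uniqueness is the substantive part.

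For the candidate $m^{\mathrm{mod}}$, the three defining conditions are checked directly. Since $\ti{\ga}$ solves the scalar jump problem $\ti{\ga}_+=\pm\I\ti{\ga}_-$ on $[\pm\I a,\pm\I c]$ and has no jump on $[\I a,-\I a]$, while the vector $(N_s((k,+)),N_s((k,-)))$ solves the jump problem for $n$ (that is, \eqref{jumpcondmod} with \eqref{eqjumpsigI}), the transformation \eqref{mn} shows that $m^{\mathrm{mod}}$ satisfies \eqref{defmvecmod}. The normalization \eqref{normcond} follows from $\ti{\ga}(\infty)=1$ and the identity $E_s(\tfrac14)=N_s(\infty_+)\neq0$ (valid for all $\ti{\Lambda}\in\R$), and \eqref{symcond} holds because the underlying function is $E=E_s$, which is symmetric, equivalently $N_s$ satisfies \eqref{symN}. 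For the regularity claim one tracks singularities through the transformations: $\ti{\ga}$ has fourth-root zeros at $\pm\I a$ and fourth-root poles at $\pm\I c$; $N_s=E_s\circ A$ has at most simple poles over $\pm\I a$ and is holomorphic elsewhere on $X$, since the only poles of $E_s$ on $\C/(\Z+\tau\Z)$ are simple ones at $\tfrac{\tau}{2}$ and $K=\tfrac{\tau}{2}+\tfrac12$; and $A$ has square-root branching at the points of $\mathcal G$. Combining these gives continuity of $m^{\mathrm{mod}}$ up to $[\I c,-\I c]$ away from $\mathcal G$, together with the bound $m^{\mathrm{mod}}(k)=O((k-\kappa)^{-1/4})$ at each $\kappa\in\mathcal G$.

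For uniqueness, let $m$ be an arbitrary function satisfying the listed hypotheses. Set $n=m/\ti{\ga}$, so that $n$ obeys \eqref{eqjumpsigI} and has locally square-integrable boundary values away from $\mathcal G$; lift to the Riemann surface via \eqref{nN} to obtain $N$ obeying \eqref{jumpN}; and put $E(z)=N(A^{-1}(z))$. By the computations around \eqref{eqjcav} and \eqref{SymCond}, $E$ satisfies \eqref{eqjcav} and is symmetric. It remains to verify that $E$ is meromorphic on $\C/(\Z+\tau\Z)$ with poles only at $\tfrac{\tau}{2}$ and $K$, each at most simple, so that $E\in\FOm$. Away from $[\I c,-\I c]$ the function $m$ is holomorphic. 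Across each open sub-arc of $[\I c,-\I c]$ the jump in \eqref{jumpcondmod} is constant and invertible, and since $n$ has locally square-integrable boundary values on these arcs, the usual reflection/Morera argument for Riemann--Hilbert problems with analytic jumps shows that $E$ (equivalently $N$) extends holomorphically across the corresponding interior arcs on $X$. At each $\kappa\in\mathcal G$, square-integrability of the boundary values forces $m(k)=O((k-\kappa)^{-\alpha})$ with some $\alpha<\tfrac12$ in a neighbourhood of $\kappa$ cut along the contour; using the fourth-root zero of $\ti{\ga}$ at $\pm\I a$ (respectively its fourth-root pole at $\pm\I c$) and the branch-point coordinate $\zeta$ on $X$ with $\zeta^2\sim k-\kappa$, this yields $N=O(\zeta^{-t})$ with $t<2$ over $\pm\I a$ and $t<1$ over $\pm\I c$. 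By the Riemann removable-singularity theorem $N$ thus has at most a simple pole over $\pm\I a$ and is holomorphic over $\pm\I c$; since $A$ is a local biholomorphism near each branch point and $A(\I a)=\tfrac{\tau}{2}$, $A(-\I a)=K$ modulo $\Z+\tau\Z$, the function $E$ is meromorphic on $\C/(\Z+\tau\Z)$ with at most simple poles at $\tfrac{\tau}{2}$ and $K$ and no others.

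Now Lemma~\ref{uniqueLemma} and the subsequent corollary, applied with $p_1=\tfrac{\tau}{2}$ and $p_2=K$, give $E=c_sE_s+c_aE_a$ for unique constants $c_s,c_a$; symmetry of $E$ forces $c_a=0$, because $E_a$ is anti-symmetric and not identically zero, so $E=c_sE_s$. Finally \eqref{normcond} reads $1=\ti{\ga}(\infty)\,c_sE_s(\tfrac14)=c_sN_s(\infty_+)$, which determines $c_s=N_s(\infty_+)^{-1}$; this is admissible precisely because $E_s(\tfrac14)\neq0$. Unwinding the transformations gives $m=m^{\mathrm{mod}}$, completing the uniqueness proof. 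I expect the main obstacle to be the local analysis at the points of $\mathcal G$: turning ``square-integrable boundary values'' into a genuine meromorphic extension of $E$ across $\tfrac{\tau}{2}$ and $K$ with the order of the poles under control, and, to a lesser extent, justifying the holomorphic continuation across the open sub-arcs for merely $L^2$ rather than continuous boundary data.
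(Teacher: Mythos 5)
Your proposal follows the same route as the paper: existence by inspection of the explicit formula, and uniqueness by passing through the dictionary $m\leftrightarrow n\leftrightarrow N\leftrightarrow E$, showing the unknown's $E$ lies in $\FOm$ with at most simple poles at $\tfrac{\tau}{2}$ and $K$, and then invoking Lemma~\ref{uniqueLemma} and its corollary together with the symmetry (killing $E_a$) and the normalization (fixing the constant, using $E_s(\tfrac14)\neq 0$). This is exactly the paper's argument, which it summarizes with ``Returning to our original problem we have shown''; the exponent bookkeeping at the branch points ($N=O(\zeta^{-t})$ with $t<2$ over $\pm\I a$, $t<1$ over $\pm\I c$) is consistent and correctly yields at most simple poles over $\pm\I a$ and removability over $\pm\I c$.

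The one step you flag as the main obstacle is indeed the only delicate point, and your stated inference is not valid under a literal reading of the hypothesis: square integrability of the limiting boundary functions alone does \emph{not} force $m(k)=O((k-\kappa)^{-\alpha})$ near $\kappa\in\mathcal G$. For instance, $\ti m(k)=\exp\bigl(-(k^2+a^2)^{-2}\bigr)\,m^{\mathrm{mod}}(k)$ is holomorphic off $[\I c,-\I c]$, satisfies \eqref{defmvecmod}, \eqref{symcond}, \eqref{normcond}, and has boundary values on the cut dominated by those of $m^{\mathrm{mod}}$ (the extra factor has modulus at most $1$ on the imaginary axis), yet it has essential singularities at $\pm\I a$ and differs from $m^{\mathrm{mod}}$. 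What rescues the argument is the standard reading of ``square integrable boundary values'' in the $L^2$-Riemann--Hilbert sense (Hardy/Smirnov class: uniform $L^2$ bounds on contours approaching the cut, with limits attained in $L^2$), which excludes such factors and yields the pointwise bound $m(k)=O(\operatorname{dist}(k,\Sigma)^{-1/2})$; plugging this in gives $N=O(\zeta^{-3/2})$ at $\pm\I a$ and $N=O(\zeta^{-1/2})$ at $\pm\I c$, which still suffices for your conclusion. To be fair, the paper itself does not spell this out either --- its derivation is phrased in terms of the a priori fourth-root singularity condition at $\mathcal G$ rather than the $L^2$ hypothesis appearing in the lemma --- so your proof is structurally the paper's proof; just make the function-space meaning of the hypothesis explicit (or revert to the pointwise $O((k-\kappa)^{-1/4})$ condition) before deriving the pointwise bound, and note that the extension across the open arcs with $L^2$ data is the usual local Cauchy-integral/Morera argument in that class.
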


\section{Matrix-valued solutions}
\label{sec:mv}

In the framework of the nonlinear steepest descent analysis one usually needs to construct a matrix-valued R-H solutions which is invertible. This is a necessary step to arrive at a small-norm R-H problem which can be solved via a Neumann series. However, while many integrable wave equations like the modified KdV equation \cite{DZ} or the nonlinear Schr\"odinger equation \cite{DZNLS} have a matrix-valued R-H formulation, this is not the case for the KdV equation. 

Recall that the model matrix-valued R-H problem related to the KdV equation has a jump matrix satisfying (see \cite{EGKT}, \cite{GT})
\begin{align} \label{symcondV}
    v^{\text{mod}}(-k) = \sigma_1 (v^{\text{mod}}(k))^{-1} \sigma_1, \qquad \det v(k) = 1, \quad k \in \Sigma.
\end{align}
For the corresponding holomorphic matrix-valued solution $M^{\text{mod}}(k)$, one would then require
\begin{align} \label{normcondM}
    \lim_{k \to \infty} M^{\text{mod}}(k) = \begin{pmatrix}
    1 & 0
    \\
    0 & 1
    \end{pmatrix}.
\end{align}
Moreover, given holomorphicity of $M^{\text{mod}}(k)$, we can derive from \eqref{symcondV} and \eqref{normcondM}:
\begin{align} \label{detcondM}
    M^{\text{mod}}(-k) = \sigma_1 M^{\text{mod}}(k) \sigma_1, \qquad \det M^{\text{mod}}(k) \equiv 1.
\end{align}
Note that \eqref{detcondM} implies that $M^{\text{mod}}(k)$ must have the form
\begin{align*}
    M^{\text{mod}}(k) = \begin{pmatrix}
    \tilde \alpha(k) & \tilde \beta(-k)
    \\
    \tilde \beta(k) & \tilde \alpha(-k)
    \end{pmatrix}
\end{align*}
and that $(1, 1)M^{\text{mod}}(k)$ will satisfy the symmetry condition \eqref{symcond}. 

Let us now present three different ways of writing down matrix-valued solutions of the model R-H problem corresponding to the KdV equation with steplike initial data in the transition region. Each one will violate some standard assumption described above, as satisfying all of them is in general impossible, see \cite[Rem.~4.1]{EPT}.
\subsection{Partial normalization at infinity}
We start with the function
\[
M^{\text{mod}}_1(k) = \ti{\ga}(k) \begin{pmatrix}\alpha_1(k) & \beta_1(-k)\\ \beta_1(k) & \alpha_1(-k)\end{pmatrix}
\]
where
\begin{align*}
\alpha_1(k)&= \frac{1}{2} \big( N_a(\infty_+) N_s(k) + N_s(\infty_+) N_a(k) \big), 
\\
\beta_1(k)&= \frac{1}{2} \big( N_a(\infty_+) N_s(k) - N_s(\infty_+) N_a(k) \big).
\end{align*}
Note that $M^{\text{mod}}_1(k)$ satisfies the symmetry condition in \eqref{detcondM} and satisfies \emph{partially} the normalization \eqref{normcondM}. In fact we have
\begin{align*}
    \lim_{k\to\infty}M^{\text{mod}}_1(k) = N_s(\infty_+) N_a(\infty_+) \begin{pmatrix} 1 & 0\\ 0 & 1\end{pmatrix}, 
\end{align*}
with
\begin{align*}
    \det M^{\text{mod}}_1(k)= N_s(\infty_+)^2 N_a(\infty_+)^2.
\end{align*}
The problem here is that the prefactor vanishes for $\tilde \Lambda = \frac{1}{2}\pmod 1$ as $N_a(\infty_+) = 0$, hence we cannot enforce the normalization \eqref{normcondM} for all $\tilde \Lambda$. In particular, $M^{\text{mod}}_1(k)$ is not invertible for these values of $\tilde \Lambda$. The relation to $m^{\text{mod}}(k)$ is given through
\begin{align*}
    m^{\text{mod}}(k) = \frac{1}{N_s(\infty_+) N_a(\infty_+)} (1, 1)  M^{\text{mod}}(k).
\end{align*}
where one needs to use the rule of l'H\^ospital for $\tilde \Lambda = \frac{1}{2} \pmod 1$. 

\subsection{Different symmetry condition}
The function
\[
M^{\text{mod}}_2(k) = \ti{\ga}(k) \begin{pmatrix} N_s(k) & N_s(-k)\\ N_a(k) & N_a(-k)\end{pmatrix}
\]
is a matrix-valued solution which satisfies the new symmetry condition $M(-k)=\si_3 M(k) \si_1$ and is nondiagonal at infinity. Moreover, 
\[
\det M^{\text{mod}}_2(k)= -2\ti{\ga}(0) N_s(0_+) N_a(0_+) =  - 2 N_s(\infty_+) N_a(\infty_+),
\]
and the relation to the vector-valued model solution is given by
\[
m^{\text{mod}}(k) = \frac{1}{N_s(\infty_+)} (1, 0)  M^{\text{mod}}_2(k),
\]
which does not require the rule of l'H\^osptial as $N_s(\infty_+) \not = 0$ for all $\tilde \Lambda$.
The advantage of this matrix-valued solution is that its determinant has only first order zeros. 

\subsection{Singularity at the origin}
Finally, we write down a matrix-valued solution $M^{\text{mod}}_3(k)$ with determinant constant equal to $1$. The price we have to pay is a singularity at the origin, making $M^{\text{mod}}_3(k)$ a meromorphic solution. To be precise, we will move the poles of the anti-symmetric solution to $\hat p_{1,2} = \pm \frac{1}{4} + \frac{\tau}{2}$, which corresponds to a pole at $(0, \pm)$ on $X$. Following Section \ref{ModelRH}, this gives rise to an anti-symmetric solution of the form
\begin{align} \label{singE}
    \hat E_a(z) = \frac{\theta_3(z+\frac{1}{4}-\frac{\ti \Lambda}{2}-\frac{\tau}{2}|\tau)\theta_3(z-\frac{1}{4}-\frac{\ti \Lambda}{2}+\frac{\tau}{2}|\tau)}{\theta_3(z+\frac{1}{4}|\tau)\theta_3(z-\frac{1}{4}|\tau)}, \qquad \hat N_a(p) = \hat E_a(A(p)).
\end{align}
We can now define $M^{\text{mod}}_3(k)$  analogously to $M^{\text{mod}}_1(k)$, but substituting $\hat N_a(k)$ for $N_a(k)$, and including the correct normalization at infinity for $\ti \Lambda \not = 0 \pmod 1$:
\[
M^{\text{mod}}_3(k) = \frac{\ti{\ga}(k)}{N_s(\infty_+)\hat N_a(\infty_+)} \begin{pmatrix}\alpha_3(k) & \beta_3(-k)\\ \beta_3(k) & \alpha_3(-k)\end{pmatrix}, \qquad \ti \Lambda \not = 0 \pmod 1,
\]
where
\begin{align*}
\alpha_3(k)&= \frac{1}{2} \big(\hat N_a(\infty_+) N_s(k) + N_s(\infty_+) \hat N_a(k) \big), 
\\
\beta_3(k)&= \frac{1}{2} \big( \hat N_a(\infty_+) N_s(k) - N_s(\infty_+) \hat N_a(k) \big),
\end{align*}
Note that $N_s(\infty_+)\hat N_a(\infty_+)\not = 0$ for $\ti \Lambda \not = 0 \pmod 1$ and thus we have:
\begin{align*}
    \lim_{k\to \infty} M^{\text{mod}}_3(k) = \begin{pmatrix}
    1 & 0
    \\
    0 & 1
    \end{pmatrix}, \qquad \ti \Lambda \not = 0 \pmod 1.
\end{align*}
Moreover, $\det M^{\text{mod}}_3(k)$ is an even meromorphic function with at most a simple pole at the origin, hence $\det M^{\text{mod}}_3(k) \equiv \det M^{\text{mod}}_3(\infty)= 1$. We do not define $M^{\text{mod}}_3(k)$ for $\ti \Lambda = 0 \pmod 1$, which should not pose a problem in applications, as explained in the next section.
\section{Comparison to previous work}
It turns out that sacrificing holomorphicity, while retaining \eqref{normcondM} and \eqref{detcondM}, is the most convenient way to deal with the ill-posedness of the holomorphic matrix-valued model problem for the KdV equation. 
Indeed, this was the strategy in \cite{EPT}. Note however, that the anti-symmetric meromorphic vector solutions found \cite{EPT} and \cite{GGM} are not the same as given by \eqref{singE}. The reason is, that while we assumed that $\hat N_a(p)=\hat E_a(A(p))$ has only poles at $0_\pm$, the pole condition was not necessary, as we could still allow for singularities at $\pm \I a$, as is the case for $m^{\text{mod}}(k)$. Indeed, $m^{\text{mod}}(k)/k$ is an anti-symmetric solution to the vanishing problem where solutions are required to vanish at infinity. Hence, there is no chance for uniqueness if we allow for poles at $0$ and fourth root singularities at $\pm \I a$. Moreover for $\ti \Lambda = 0 \pmod 1$, $m^{\text{mod}}(k)$ has no singularities at $\pm \I a$, and hence by our uniqueness Lemma \ref{uniqueLemma} must coincide with the solution generated by $\hat N_a(p)$. 

Interestingly, any anti-symmetric solution with a simple pole at the origin and fourth root singularities at $\pm \I a$, which is normalized to $(-1, 1)$ at infinity, is adequate for the analysis performed in \cite{EPT}. The reason is that the pole cancellation in the final step of the nonlinear steepest descent analysis is due to the underlying symmetry class, rather than the exact form of the second vector-valued solution (see Lemma 6.4 in \cite{EPT}). While the solution generated by \eqref{singE} is not normalizable for $\ti \Lambda = 0 \pmod 1$, this is not an issue, as for these values of $\ti \Lambda$ there is a regular matrix-valued model solution given in terms of \eqref{SymSol}, \eqref{AntiSol}  anyways.

\end{document}